\begin{document}

\title{$K$-Theory of Multiparameter Persistence Modules: Additivity}


\author{Ryan Grady
\and Anna Schenfisch}

\address{Department of Mathematical Sciences\\Montana State University\\Bozeman, MT 59717}
\email{ryan.grady1@montana.edu}

\address{Department of Mathematical Sciences\\Montana State University\\Bozeman, MT 59717}
\email{annaschenfisch@montana.edu}

\keywords{Multiparameter Persistence, Algebraic K-Theory}

\subjclass{Primary 18F25. Secondary 55N31, 19M05.}

\maketitle

\tableofcontents
\begin{abstract}
Persistence modules stratify their underlying parameter space, a quality that make persistence modules amenable to study via invariants of stratified spaces.
In this article, we extend a result previously known only for one-parameter persistence modules to grid multi-parameter persistence modules. Namely, we show the $K$-theory of grid multi-parameter persistence modules is additive over strata. This is true for both standard monotone multi-parameter persistence as well as multi-parameter notions of zig-zag persistence. We compare our calculations for the specific group $K_0$ with the recent work of Botnan, Oppermann, and Oudot, highlighting and explaining the differences between our results through an explicit projection map between computed groups.
\end{abstract}
\section{Introduction}

%

Persistent homology has become a central tool in {\em topological data analysis} (TDA). It has also been added to the toolbox of symplectic geometers, see for instance \cite{PoltBook}. The typical setting for persistent homology is that of a filtered topological space, $Y_\bullet$, and a field of coefficients, $\FF$. The sequence of vector spaces and linear maps encoded by $H_\ast (Y_\bullet; \FF)$ are then used to analyze the space $Y_\bullet$ and/or a dataset from which $Y_\bullet$ has been constructed. Persistence modules are a generalization of persistent homology in that they are simply a functorial assignment---say from filtered spaces---to a ``reasonable" category $V$. Typical examples of ``reasonable" categories include no condition, Abelian or exact categories.

The filtered spaces, $Y_\bullet$, of persistence theory often arise by considering a dataset, fixing a parameter space, and deciding on a scheme which associates a space to each parameter value, e.g., the \v{C}ech or Vietoris--Rips complexes associated to a point cloud and a real number.  In practice, our parameter space, $X$, is a manifold. If our parameter space is equipped with an embedding $X \hookrightarrow \RR$, we are in the setting of one-parameter persistence. If our parameter space is embedded in $\RR^d$, then we are in the setting of multiparameter persistence. Although $d$-dimensional Euclidean space is perhaps the typical embedding target, in general, an embedding into any manifold of dimension $d$ is a setting for $d$-parameter persistence. 

One-parameter persistence is well understood and under reasonable hypotheses, there are complete, discrete, computable invariants.  Multiparameter persistence is more subtle, and it is a charge of the community to find computable, descriptive invariants.  See the original work of Carlsson and Zomorodian \cite{CZmulti} or the more recent survey \cite{BL}.

Inspired by the works of Botnan, Oppermann, and Oudot \cite{BOO}, and Grady and Schenfisch \cite{grady2021zig}, we study the universal additive invariant of persistence modules: their algebraic $K$-theory. In the present, we use the same setup as in \cite{grady2021zig}, whereby persistence modules are defined to be constructible cosheaves over the parameter space, which itself has been stratified by an ``event stratification."  By imposing some mild hypotheses on our stratified parameter spaces, the category of such constructible sheaves is equivalent to the category representations of the (combinatorial) entrance path category of our space. (See \cite{CP} for a nice overview of this correspondence.) This category of representations---a functor category---has well-defined algebraic $K$-theory for nice target categories, e.g., modules over a commutative ring, pointed sets.

Note that, while persistence modules are most typically defined as representations of partially ordered sets (posets), we define stratified parameter spaces in a direction-free way. Roughly speaking, we keep track of ``event times'' as zero strata, but the data of how event times relate to one another is not explicitly stored in the stratification of a parameter space. Instead, we incorporate the poset structure of a persistence module into the assignments of morphisms out of the stratified parameter space; see~\cite{grady2021zig} for further insights into this perspective. One utility of this formalism is that it puts monotone persistence, zig-zag persistence, and their multiparameter generalizations on common footing.

Our main contributions in the present article are to:
\begin{enumerate}
\item Prove that the $K$-theory of multiparameter persistence (grid) modules is additive over the strata of the parameter space. This main result is \thmref{bigOplus}.  As an immediate corollary, we obtain the groups $K_0$ and $K_1$ for persistence modules valued in finite dimensional vector spaces over a field. Our result is a generalization of Theorem 4.1.6 of \cite{grady2021zig} to the multiparameter setting.
\item For the groups $K_0$, compare our results with those of \cite{BOO}.  In particular, while our $K_0$ is not isomorphic to the Grothendieck group of rank-exact persistence modules of Botnan, Oppermann, and Oudot, we do obtain an explicit comparison homomorphism. This comparison is the content of Section \ref{sect:rankexact}.
\end{enumerate}

\subsection*{Acknowledgements} RG is supported by the Simons Foundation under Travel Support/Collaboration 9966728 and AS is supported by the National Science Foundation under NIH/NSF DMS 1664858.  The authors wish to thank Steve Oudot for helpful correspondence as well as Brittany Fasy for thoughtful feedback.

\section{Background and Conventions}

\subsection{Stratified Spaces and Entrance Paths}

\begin{definition}
A {\em stratified topological space} is a triple $(Y \xrightarrow{\phi} \cP)$ consisting of
\begin{itemize}
    \item a paracompact, Hausdorff topological space, $Y$,
    \item a poset $\cP$, equipped with the upward closed topology, and
    \item a continuous map $Y \xrightarrow{\phi} \cP$.
\end{itemize}
\end{definition}

Note that any topological space is stratified by the terminal poset consisting of a singleton set. Moreover, the simplices of a simplicial complex, $K$, come equipped with the structure of a poset, and we call the resulting stratification of $K$ the \emph{face stratification} which we denote by $\Nat(K)$ \cite{stanley1991f}. 

\begin{definition} Given a stratified topological space $\phi : Y \to \cP$, and any $p \in \cP$, the {\em $p$-stratum}, 
$Y_p$, is defined as
\[
Y_p := \phi^{-1}(p)
\]
\end{definition}

\begin{definition}
 Let $X$ be a manifold with corners.  A {\em cubulation} of $X$ is a covering by embedded cubes such that their interiors are disjoint and every nontrivial intersection of cubes consists of their common lower-dimensional face. A {\em cubical manifold} is a manifold (with corners) equipped with a (fixed) cubulation.
\end{definition}

Analogous to simplicial complexes/combinatorial manifolds, cubical manifolds are naturally stratified by the face poset of the cubulation and we denote the resulting stratification by $\Nat(X)$.

\begin{definition}
Let $X$ be a cubical manifold stratified by $\Nat(X)$. The \emph{combinatorial entrance path category}, $\sf{Ent}_\Delta (X)$ has as objects the strata of $X$ and a morphism $\sigma \to \tau$ whenever $\tau$ is a face of $\sigma$.
\end{definition}

\subsection{Grid Modules}

As noted above, we define our persistence modules as representations of the (combinatorial) entrance path category associated to a stratified parameter space.

\begin{definition}\label{def:kofzz}
Let $X$ be a cubical manifold with its face stratification, $\mathsf{Ent}_\Delta (X)$ its combinatorial entrance path category, and   $V$ any category. The {\em category of $V$-valued persistence modules parameterized by $X$}, $\mathsf{pMod}^{V}(X)$, is given by
\[
\mathsf{pMod}^{V}(X):=\mathsf{Fun} (\mathsf{Ent}_\Delta (X),V).
\]
Hence, the \emph{$K$-theory of $V$ valued persistence modules (parametrized by $X$)} is the $K$-theory spectrum (whenever it exists) of the category above: $\KK(\mathsf{pMod}^{V}(X))$.
\end{definition}

In what follows, we are  interested in {\em grid modules}, i.e., those persistence modules parametrized by subspaces of cubulated $\RR^d$.  To that end, we need a few preliminary definitions. 

Given a stratified space $A$ and an embedding $A \hookrightarrow Z$, there is a stratification of $Z$ extending that on $A$ called the {\em connected ambient stratification} and the resulting stratified space is denoted $(Z,A)^{\wwedge}$.  See Definition 2.1.11 of \cite{grady2021zig} for details.

\begin{definition}[Stratified $d$-Parameter Space and its Entrance Path Category]
    \label{def:multii}
    Let $\cI = \{I_i\}_{i=1}^{i=d}$ be a collection of finite subsets of $\R$, so $I_i \subset \R$ is finite.
Define the stratified space
    \[
        (\R^d; \cI) := \prod_{n=1}^d (\R, I_n)^{\wwedge}.
    \]
\end{definition}

    \begin{example}[Stratified Two-Parameter Space and its Stratifying Set]
	   Consider the set of subsets $\cI= \{\{0, 1, \dots, 6 \},\{0,1, \dots, 4\}\}$. These subsets then define a
        stratified space, $(\R^2; \cI)$, shown in \figref{bifiltration_grid} as the large rectangular grid. This stratified space
        has 35 zero strata, 58 one-strata, and 24 two-strata corresponding to vertices, edges, and faces, respectively. Note that this means the stratifying poset of    $(\R^d; \cI)$  then has $35+58+24 = 117$ objects.
    \end{example}

\begin{figure}[h!]
    \centering
    \includegraphics[width=.4\textwidth]{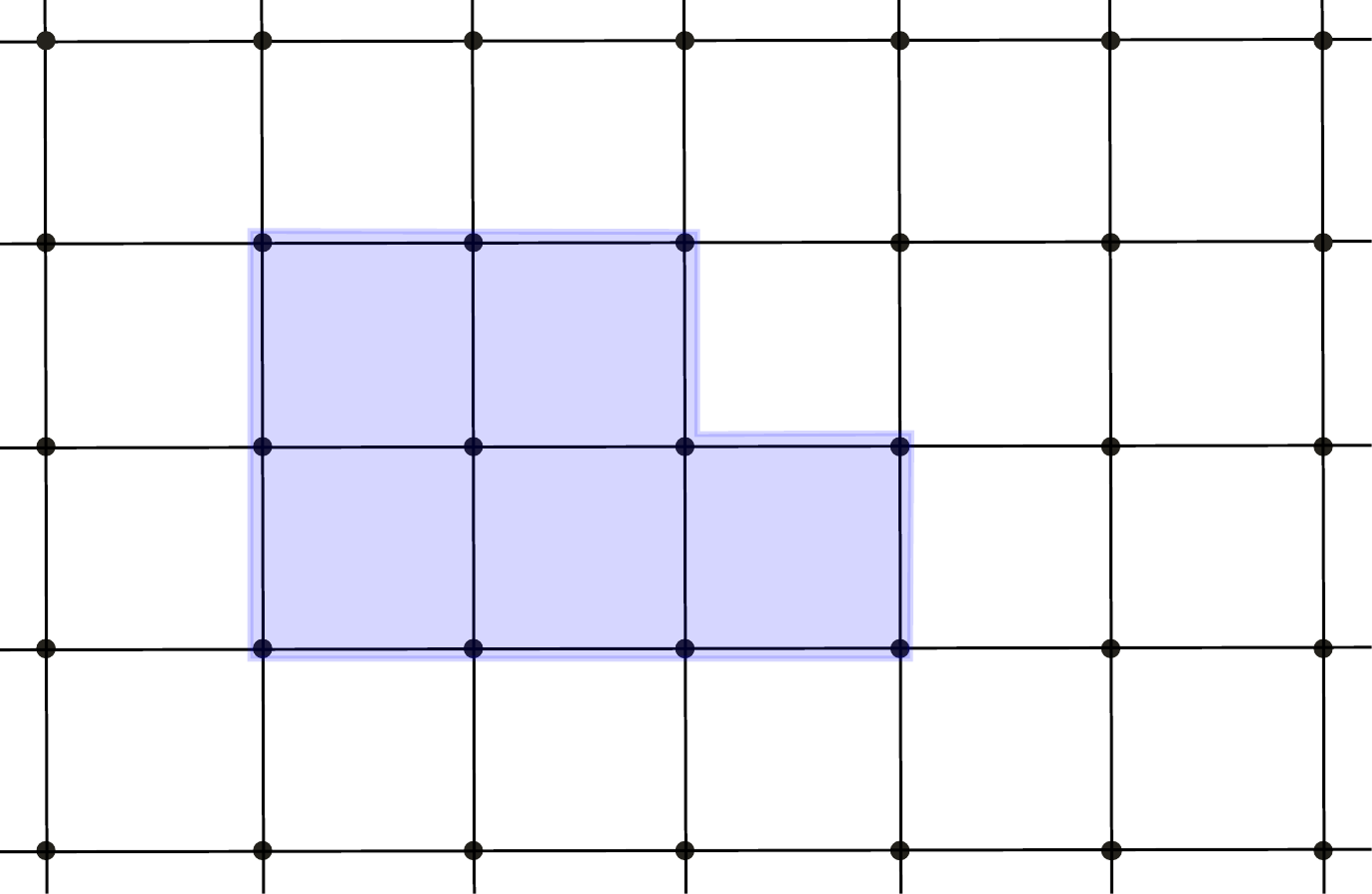}
    \caption{A cubical two-manifold (shaded) appearing as a substratified space of the stratified parameter space $(\R^2; \{\{0, 1, \dots, 6 \},\{0,1, \dots, 4\}\})$.} \label{fig:bifiltration_grid}
\end{figure}

Note that any closed and bounded substratified space of $(\R^d;\cI)$ is naturally cubulated.

\begin{definition}[Cubical Grid Manifold]
 A {\em cubical grid $d$-manifold} is a cubical manifold of dimension $d$ that is embedded as a substratified space of $(\R^d; \cI)$ for some $\cI$.  
\end{definition}

See~\figref{bifiltration_grid} for an instance of a cubical grid manifold.

\subsection{Waldhausen $K$-Theory}

Dan Quillen, justifiably, received much recognition for defining higher algebraic $K$-theory via Abelian and exact categories.  Some twenty years later, Friedhelm Waldhausen found a further generalization of Quillen's setting in his work on the algebraic $K$-theory of spaces \cite{Wald}. Today, this setting is that of Waldhausen categories and exact functors between them. Here we recall, tersely, some key notions leading to Waldhausen's Additivity Theorem. A modern introduction to this material can be found in \cite{fiore} or the encyclopedic \cite{Weibel}.

\begin{definition} Let $\sA, \sE,$ and $\sB$ be Waldhausen categories. A sequence of exact functors
\[
\sA \xrightarrow{i} \sE \xrightarrow{f} \sB
\]
is \emph{exact} if
\begin{enumerate}
\item The composition $f \circ i$ is the zero map to $\sB$;
\item The functor $i$ is fully faithful; and
\item The functor $f$ restricts to an equivalence between $\sE / \sA$ and $\sB$.\footnote{Here,  $\sE/\sA$ is the full subcategory of $\sE$ on objects $e$ such that, for all $a \in \sA$, the hom set $\sE (i(a), e)$ is a point.} 
\end{enumerate}
A sequence, as above, is \emph{split} if there exist exact functors
\[
\sA \xleftarrow {j} \sE \xleftarrow{g} \sB
\]
that are adjoint to $i$ and $f$ and such that the unit of the adjunction, $\mathrm{Id}_\sA \Rightarrow j \circ i$, and the counit of the adjunction, $f \circ g \Rightarrow \mathrm{Id}_\sB$, are natural isomorphisms.
\end{definition}

\begin{definition}\label{defn:standard}
 A split short exact sequence of Waldhausen categories
\begin{center}
\begin{tikzcd}
\sA \ar[r,"i"] &
\sE \ar[r,"f"] \ar[l, bend left, "j"]&
\sB \ar[l, bend left, "g"]
\end{tikzcd}
\end{center}
is \emph{standard} if 
\begin{enumerate}
\item For each $e \in \sE$, the component of the counit, $(i \circ j)(e) \to e$, is a cofibration;
\item For each cofibration $e \hookrightarrow e'$ in $\sE$, the induced map
\[
e \amalg_{(i \circ j)(e)} (i \circ j) (e') \to e'
\]
is a cofibration; and
\item If $a \to a' \to 0$ is a cofiber sequence in $\sA$, then the first map is an isomorphism.
\end{enumerate}
\end{definition}

The following is one of the fundamental theorems of algebraic K-theory. It is known as {\it Waldhausen Additivity}.

\begin{theorem}\label{thm:additivity}
Let
\begin{center}
\begin{tikzcd}
\sA \ar[r,"i"] &
\sE \ar[r,"f"] \ar[l, bend left, "j"]&
\sB \ar[l, bend left, "g"]
\end{tikzcd}
\end{center}
be a standard split SES of Waldhausen categories. Then the functors $i$ and $g$ induce an equivalence of spectra
\[
\KK(i) \vee \KK(g) \colon \KK(\sA) \vee \KK(\sB) \xrightarrow{\sim} \KK(\sE).
\]
\end{theorem}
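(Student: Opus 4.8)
The plan is to deduce the statement from the classical form of Waldhausen's Additivity Theorem for \emph{cofiber sequences of exact functors}: if $F' \to F \to F''$ is such a cofiber sequence of exact endofunctors of a Waldhausen category, then on $K$-theory the induced self-map splits as $\KK(F)\simeq \KK(F')+\KK(F'')$ (Waldhausen 1.3.2; see \cite{Wald,Weibel}). First I would record the homotopical data supplied by the split short exact sequence: since the unit $\mathrm{Id}_\sA \Rightarrow j\circ i$ and counit $f\circ g \Rightarrow \mathrm{Id}_\sB$ are natural isomorphisms, we get $\KK(j)\KK(i)=\mathrm{id}_{\KK(\sA)}$ and $\KK(f)\KK(g)=\mathrm{id}_{\KK(\sB)}$, while the vanishing $f\circ i=0$ from exactness gives $\KK(f)\KK(i)=0$. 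The candidate inverse to $P:=\KK(i)\vee\KK(g)$ is $R=(\KK(j),\KK(f))\colon \KK(\sE)\to\KK(\sA)\vee\KK(\sB)$, and the task is to show $P$ and $R$ are mutually inverse up to homotopy.

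The heart of the argument is to produce a cofiber sequence of exact endofunctors of $\sE$,
\[
i\circ j \;\hookrightarrow\; \mathrm{Id}_\sE \;\longrightarrow\; q,
\]
where $q$ is the objectwise cofiber of the counit $i\circ j \Rightarrow \mathrm{Id}_\sE$. Condition (1) of Definition~\ref{defn:standard} guarantees that this counit is an objectwise cofibration, so the cofiber functor $q$ is defined; condition (2) is exactly what is needed to verify that $q$ carries cofibrations to cofibrations, i.e.\ that $q$ is exact; and condition (3) ensures the sequence is a genuine cofiber sequence by excluding nontrivial cofibrations in $\sA$ with trivial cofiber. I would then identify $q$ with $g\circ f$: applying the exact functor $f$ and using $f\circ i=0$ yields $f\circ q\simeq f$, while naturality of the unit $\mathrm{Id}_\sE\Rightarrow g\circ f$ factors $e\to gf(e)$ through $q(e)$; the equivalence $f\colon \sE/\sA\xrightarrow{\sim}\sB$ together with the section $g$ then promotes the resulting comparison $q\Rightarrow g\circ f$ to a natural weak equivalence.

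With the cofiber sequence $i\circ j \hookrightarrow \mathrm{Id}_\sE \to q\simeq g\circ f$ in hand, Additivity gives
\[
\mathrm{id}_{\KK(\sE)}=\KK(\mathrm{Id}_\sE)\simeq \KK(i\circ j)+\KK(g\circ f)=\KK(i)\KK(j)+\KK(g)\KK(f),
\]
which is precisely the composite $P\circ R$. For the reverse composite $R\circ P$, the four entries of the associated $2\times 2$ matrix of self-maps are $\KK(j)\KK(i)=\mathrm{id}$, $\KK(f)\KK(g)=\mathrm{id}$, $\KK(f)\KK(i)=0$, and the off-diagonal term $\KK(j)\KK(g)$. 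Since the lower-left entry vanishes and both diagonal entries are identities, $R\circ P$ is represented by a unipotent upper-triangular matrix and is therefore a homotopy equivalence regardless of the value of $\KK(j)\KK(g)$. Having shown $P\circ R\simeq \mathrm{id}$ and that $R\circ P$ is an equivalence, $P=\KK(i)\vee\KK(g)$ has both a right and a left homotopy inverse, hence is an equivalence of spectra.

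The step I expect to be the main obstacle is the middle paragraph: verifying that $q$ is a well-defined \emph{exact} endofunctor and identifying it with $g\circ f$ up to natural weak equivalence. This is where all three standardness hypotheses are consumed and the only place the structure of $\sE$ beyond the formal adjunction data genuinely enters. A convenient feature of the matrix argument is that it never requires computing $\KK(j)\KK(g)$, so I would not attempt to show that $j\circ g$ is trivial.
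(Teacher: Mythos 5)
The paper does not actually prove this statement: Theorem~\ref{thm:additivity} is quoted as Waldhausen's Additivity Theorem and delegated entirely to the references (\cite{Wald}, \cite{fiore}, \cite{Weibel}), so there is no in-text argument to compare yours against. Your proposal is the standard derivation of the split-SES form from the cofiber-sequence form of additivity, and in outline it is sound. The formal bookkeeping is all correct: natural isomorphisms of exact functors induce homotopic maps on $K$-theory, so the unit and counit give $\KK(j)\KK(i)\simeq\mathrm{id}$ and $\KK(f)\KK(g)\simeq\mathrm{id}$, exactness gives $\KK(f)\KK(i)=0$, the cofiber sequence $i\circ j\rightarrowtail \mathrm{Id}_\sE\twoheadrightarrow q\simeq g\circ f$ yields $P\circ R\simeq\mathrm{id}_{\KK(\sE)}$ via additivity, and the unipotent upper-triangular matrix argument for $R\circ P$ (together with the two-sided-inverse conclusion) is a clean way to finish without ever computing $\KK(j)\KK(g)$. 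You have also correctly located where the three standardness conditions of Definition~\ref{defn:standard} enter: (1) makes the objectwise cofiber $q$ exist, (2) is precisely the condition that $i\circ j\rightarrowtail\mathrm{Id}_\sE$ is a cofibration of exact functors and that $q$ preserves cofibrations.

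The one place your sketch stops short of a proof is the step you yourself flag: upgrading the comparison $q\Rightarrow g\circ f$ to a natural weak equivalence. Applying $f$ to the cofiber sequence shows $f(q(e))\cong f(e)\cong fgf(e)$, but to conclude that $q(e)\to gf(e)$ is a weak equivalence in $\sE$ you need $f$ to reflect weak equivalences between objects of $\sE/\sA$ (and to know that $q(e)$ and $gf(e)$ actually land there); this is where the equivalence $\sE/\sA\simeq\sB$ and condition (3) are genuinely consumed, and it is the part that \cite{fiore} carries out in detail (their proof in fact runs through the $S_\bullet$-construction directly rather than through the reduction you use). For a blind reconstruction of a black-boxed classical theorem this is an acceptable and honestly signposted gap, but it is the step that would need to be written out before the argument is complete.
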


\section{$K$-Theory of Grid Modules}

In this section we prove our main theorem, which is the multiparameter analog of Theorem 4.1.6 of \cite{grady2021zig}.

It is standard that the category of finitely generated, projective modules for a commutative ring defines a Waldhausen category. So too does the category of functors from a small category into this category of modules.  (Some details are provided in Appendix A of \cite{grady2021zig}.)
\begin{lemma}\label{lem:key}
    Let $R$ be a commutative ring, $\cM$ the associated Waldhausen category of
    finitely generated projective modules. Furthermore, let $X$ be a cubical manifold
    and let $A$ denote a closed
    sub-stratified space of $X$.
    Then the following sequence is split short exact sequence of
    Waldhausen categories
    \begin{center}
        \begin{tikzcd}
            \mathsf{Fun} (\mathsf{Ent}_\Delta (X \setminus A),\cM)\ar[r,
            "j_\ast"] &
            \mathsf{Fun} (\mathsf{Ent}_\Delta (X),\cM) \ar[r, "i^\ast"]
            \ar[l, bend left, "j^\ast"] &
            \mathsf{Fun} (\mathsf{Ent}_\Delta (A),\cM) 
            \ar[l, bend left, "i_\ast"],
        \end{tikzcd}
    \end{center}
     where $i \colon A \hookrightarrow X$ and $j \colon X \setminus A
     \hookrightarrow X$ are the inclusion maps. Moreover, this sequence is standard.
\end{lemma}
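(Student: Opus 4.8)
The plan is to recognize this as the recollement attached to a sieve/cosieve decomposition of the entrance path category and then to reduce both the split-exactness and the \emph{standard} conditions to objectwise statements in $\cM$. The first step is purely combinatorial. Because $A$ is a closed sub-stratified space it is a subcomplex, so every face of a cell of $A$ again lies in $A$. Since a morphism $\sigma \to \tau$ in $\mathsf{Ent}_\Delta(X)$ records that $\tau$ is a face of $\sigma$, this says precisely that $\mathsf{Ent}_\Delta(A)$ is closed under codomains, i.e. it is a \emph{cosieve}, while $\mathsf{Ent}_\Delta(X\setminus A)$ is the complementary \emph{sieve}, closed under domains; in particular there are no morphisms out of a stratum of $A$ into a stratum of $X\setminus A$. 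I would record this and then proceed functorially.

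Next I would identify the four functors: $i^\ast$ and $j^\ast$ are restriction along the full-subcategory inclusions, and $i_\ast, j_\ast$ are the corresponding right Kan extensions, so $j^\ast \dashv j_\ast$ and $i^\ast \dashv i_\ast$. The crucial computation is that, because $\mathsf{Ent}_\Delta(X\setminus A)$ is a sieve, for each $z \in A$ the comma category governing $(j_\ast G)(z) = \varprojlim_{(z\to u),\,u\notin A} G(u)$ is empty, whence $(j_\ast G)(z)\cong 0$; thus $j_\ast$ is extension by zero and $i^\ast\circ j_\ast = 0$, which is exactness condition (1). The same emptiness (now of $z\downarrow j$ and $z\downarrow i$) shows the counits $j^\ast j_\ast \Rightarrow \mathrm{Id}$ and $i^\ast i_\ast \Rightarrow \mathrm{Id}$ are natural isomorphisms, so $j_\ast$ and $i_\ast$ are fully faithful; this gives exactness condition (2) and both invertibility conditions in the definition of a split sequence. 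The $f\dashv g$ side, $i^\ast\dashv i_\ast$ with counit $i^\ast i_\ast\Rightarrow \mathrm{Id}_\sB$, is exactly the textbook orientation.

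The remaining exactness condition, that $i^\ast$ induces an equivalence $\sE/\sA \xrightarrow{\sim}\sB$, is where I expect the real work, since it is the one step that genuinely uses the recollement rather than an objectwise argument. Unwinding $\mathrm{Hom}_\sE(j_\ast G, e)$ along the sieve condition, a natural transformation out of the $U$-supported object $j_\ast G$ must, at each $u\notin A$, land in $\bigcap_{z}\ker\bigl(e(u)\to e(z)\bigr)$ over the faces $z\in A$ of $u$; the vanishing of all such hom-sets is therefore equivalent to $e(u)\xrightarrow{\sim}\varprojlim_{(u\to z),\,z\in A} e(z)$ for every $u\notin A$. I would show this is exactly the condition that the unit $e\to i_\ast i^\ast e$ be an isomorphism, i.e. that $e$ lie in the essential image of $i_\ast$. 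Hence $i^\ast$ and $i_\ast$ are mutually inverse on $\sE/\sA$ and $\sB$, and this identification of the quotient is the main obstacle.

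Finally I would verify that the sequence is \emph{standard}, using that $\cM$ is an exact category in which admissible monomorphisms split, so that each functor category carries the Waldhausen structure with cofibrations the objectwise split monomorphisms, weak equivalences the objectwise isomorphisms, and all (co)limits computed objectwise. Across the sieve/cosieve partition, the canonical comparison maps of Definition~\ref{defn:standard} are objectwise either identities or maps into or out of $0$; the relevant one fits into the objectwise-split short exact sequence $\mathrm{Lan}_i\, i^\ast e \rightarrowtail e \twoheadrightarrow j_\ast j^\ast e$, whose first map is an objectwise split monomorphism and hence a cofibration. Condition (2) then reduces to stability of split monomorphisms under cobase change in $\cM$, computed objectwise, and condition (3) is automatic, since a cofiber sequence $a\to a'\to 0$ in $\sA$ has objectwise cokernel $0$, forcing the split monomorphism $a\to a'$ to be an isomorphism. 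The one point demanding care is orienting these structural transformations: because $\mathsf{Ent}_\Delta$ is an entrance (not exit) path category, extension by zero from $X\setminus A$ is a right rather than a left Kan extension, so the canonical filtration realizing the additivity splitting must be read off from the sieve/cosieve sequence above in the sense dual to the naive sheaf-theoretic recollement. Once all of this is in place, Theorem~\ref{thm:additivity} applies.
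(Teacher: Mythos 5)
The paper offers no argument for this lemma beyond deferring to Lemma 4.1.1 of \cite{grady2021zig} ``\emph{mutatis mutandis},'' and your overall architecture --- the sieve/cosieve decomposition of $\mathsf{Ent}_\Delta(X)$ induced by closedness of $A$, the identification of the adjoints as Kan extensions that become extension by zero, and the objectwise verification of the Waldhausen conditions --- is the right skeleton for such a proof. But your verification of exactness condition (3) has a genuine gap. You assert that the vanishing of $\mathrm{Hom}_{\sE}(j_\ast G, e)$ for all $G$ is equivalent to $e(u) \xrightarrow{\;\sim\;} \varprojlim_{(u\to z),\, z\in A} e(z)$ for all $u \notin A$, i.e.\ to $e$ lying in the essential image of $i_\ast=\mathrm{Ran}_i$. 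It is not: as your own unwinding shows, a transformation out of $j_\ast G$ is constrained only by where its component at $u$ lands, so killing all such transformations forces the comparison map $e(u) \to \varprojlim_{z} e(z)$ to be a \emph{monomorphism}, nothing more. Concretely, let $X$ be a single edge $\sigma$ with vertices $v,w$ and $A=\{v\}$. The module $e$ with $e(v)=R$ and $e(\sigma)=e(w)=0$ satisfies $\mathrm{Hom}_{\sE}(j_\ast G, e)=0$ for every $G$, yet $e(\sigma)=0 \to e(v)=R$ is not an isomorphism; moreover, comparing $e$ with $e'=\mathrm{Ran}_i(R)$ (which is $R\xrightarrow{=}R$ on $\sigma\to v$ and $0$ at $w$) one finds $\mathrm{Hom}_{\sE}(e',e)=0$ while $\mathrm{Hom}_{\sB}(i^\ast e', i^\ast e)=\mathrm{Hom}(R,R)\neq 0$. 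So the subcategory $\sE/\sA$ in the sense of the footnote to the definition of exactness is strictly larger than the image of $i_\ast$, and $i^\ast$ is not full on it; your identification of the quotient does not close. The step that genuinely carries the content of the lemma therefore still needs an argument (e.g.\ exhibiting the relevant quotient as the modules supported on $A$, the image of $\mathrm{Lan}_i$, on which $i^\ast$ really is an equivalence).

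The second problem is the orientation issue you flag but do not resolve. Your cofiber sequence $\mathrm{Lan}_i\, i^\ast e \hookrightarrow e \to \mathrm{Ran}_j\, j^\ast e$ is the geometrically correct one: the subfunctor is supported on the \emph{closed} part $A$, since the would-be subfunctor agreeing with $e$ on $X\setminus A$ and vanishing on $A$ fails naturality along any structure map $e(\sigma)\to e(\tau)$ with $\sigma\notin A$, $\tau\in A$. But condition (1) of Definition~\ref{defn:standard}, applied with the lemma's labelling $\sA=\mathsf{Fun}(\mathsf{Ent}_\Delta(X\setminus A),\cM)$, demands a natural cofibration $(j_\ast\circ j^\ast)(e)\to e$, and with $j_\ast$ the extension by zero there is no natural transformation in that direction at all. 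So you cannot end by simply invoking Theorem~\ref{thm:additivity}: you must either interchange the roles of the two outer categories (harmless for the $K$-theoretic conclusion, since additivity is symmetric in $\sA$ and $\sB$, but it changes which conditions of Definition~\ref{defn:standard} you are checking) or restate the standardness conditions for the opposite handedness of adjunction. At present your final paragraph asserts that the structural maps are ``objectwise either identities or maps into or out of $0$'' for maps that, in the orientation the lemma prescribes, do not exist.
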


The argument for the preceding lemma is, {\em mutatis mutandis}, as for Lemma 4.1.1 of \cite{grady2021zig}.

Since the sequence in \lemref{key} is a standard split short exact sequence of Waldhausen
categories, Waldhausen Additivity (\thmref{additivity}) immediately gives us the
following key corollary, which is the main tool we will use to in our argument
to``break apart and
glue together'' modules from submodules.

\begin{corollary}[$K$-Theory is Additive Over Sub-Modules]\label{cor:key}
    Let $R$ be a commutative ring,~$\cM$ the associated Waldhausen category of
    finitely generated projective modules. Furthermore, let $X$ be a cubical manifold and 
    let $A$ denote a closed
    sub-stratified space of $X$.
    Then we have an equivalence of spectra
    \[
        \KK(\mathsf{pMod^{\cM}} (X)) \cong \KK(\mathsf{pMod^{\cM}} (X \setminus
        A)) \vee
        \KK(\mathsf{pMod^{\cM}} (A)).
    \]
\end{corollary}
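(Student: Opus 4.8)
The plan is to obtain this corollary as a direct application of Waldhausen Additivity to the sequence produced in \lemref{key}; there is essentially no new mathematical content beyond a change of notation, since all of the structural work has already been carried out. First I would observe that, by Definition~\ref{def:kofzz}, the three functor categories appearing in \lemref{key} are precisely the persistence-module categories named in the statement:
\[
\mathsf{Fun}(\mathsf{Ent}_\Delta(X),\cM) = \mathsf{pMod^{\cM}}(X), \qquad \mathsf{Fun}(\mathsf{Ent}_\Delta(X\setminus A),\cM) = \mathsf{pMod^{\cM}}(X\setminus A),
\]
and likewise $\mathsf{Fun}(\mathsf{Ent}_\Delta(A),\cM) = \mathsf{pMod^{\cM}}(A)$. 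Thus the diagram of \lemref{key} is exactly a sequence of the shape $\sA \xrightarrow{i} \sE \xrightarrow{f} \sB$ from \thmref{additivity}, under the dictionary $\sA = \mathsf{pMod^{\cM}}(X\setminus A)$, $\sE = \mathsf{pMod^{\cM}}(X)$, $\sB = \mathsf{pMod^{\cM}}(A)$, with $i = j_\ast$, $f = i^\ast$, and with the two sections $j = j^\ast$ and $g = i_\ast$.

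Next I would invoke the final assertion of \lemref{key}, namely that this split short exact sequence is \emph{standard}. This is exactly the hypothesis demanded by \thmref{additivity}. Applying that theorem verbatim, the functors $i = j_\ast$ and $g = i_\ast$ induce an equivalence of spectra
\[
\KK(j_\ast)\vee\KK(i_\ast)\colon \KK(\mathsf{pMod^{\cM}}(X\setminus A))\vee\KK(\mathsf{pMod^{\cM}}(A)) \xrightarrow{\ \sim\ } \KK(\mathsf{pMod^{\cM}}(X)).
\]
Since an equivalence of spectra is invertible and the wedge is symmetric, reversing the arrow and reordering the two summands yields precisely the claimed equivalence $\KK(\mathsf{pMod^{\cM}}(X)) \cong \KK(\mathsf{pMod^{\cM}}(X\setminus A))\vee\KK(\mathsf{pMod^{\cM}}(A))$.

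The only bookkeeping worth spelling out is the identification of which functors play the roles of $i$ and $g$ in \thmref{additivity}: reading the diagram of \lemref{key}, the forward map $j_\ast\colon\sA\to\sE$ is the fully faithful inclusion (so it serves as $i$), while the bent map $i_\ast\colon\sB\to\sE$ is the section serving as $g$, exactly as recorded by the adjunctions there. I do not anticipate any genuine obstacle in this corollary, as the substantive verifications---exactness of the sequence, existence of the adjoint splittings, and the three conditions of \defref{standard} for standardness---have all been discharged already in \lemref{key}. The corollary is simply the translation of \thmref{additivity} into the persistence-module notation of Definition~\ref{def:kofzz}.
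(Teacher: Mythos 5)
Your proposal is correct and is exactly the paper's argument: the corollary is obtained by applying Waldhausen Additivity (\thmref{additivity}) to the standard split short exact sequence of \lemref{key}, with only notational translation via Definition~\ref{def:kofzz}. The paper states this as an immediate consequence without further proof, so your careful bookkeeping of the roles of $j_\ast$ and $i_\ast$ is, if anything, more explicit than the original.
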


The following is a main result of 
(\cite[Lemma 4.1.5]{grady2021zig}), and serves as a
base case for our induction into multi-parameter modules.

\begin{lemma}\label{lem:stratadd}
Let $X$ be a cubical one-manifold with a finite set of strata.
 There is an  equivalence of
spectra
\[
\KK(\mathsf{pMod^{\cM}} (X)) \cong \bigvee_{x_0 \in X_0} \KK(\mathsf{pMod^{\cM}}
(x_0)) \vee \bigvee_{x_1 \in X_1} \KK(\mathsf{pMod^{\cM}} (x_1)).
\]
where $X_i$ is the set of $i$-strata of $X$.
\end{lemma}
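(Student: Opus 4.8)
The plan is to prove this by peeling strata off one at a time, using Corollary~\ref{cor:key} (equivalently, Waldhausen Additivity, \thmref{additivity}) as the only nontrivial input; the remainder is bookkeeping made possible by the simple geometry of one-manifolds. Recall that the strata of a cubical one-manifold $X$ are exactly its vertices (the $0$-strata comprising $X_0$) and its open edges (the $1$-strata comprising $X_1$), and that for a single stratum $s$ the category $\mathsf{Ent}_\Delta(s)$ is terminal, so that $\mathsf{pMod^{\cM}}(s) = \mathsf{Fun}(\ast,\cM) \cong \cM$ and hence $\KK(\mathsf{pMod^{\cM}}(s)) \cong \KK(\cM)$.

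First I would isolate the vertices. The set $X_0$ of all $0$-strata is a closed sub-stratified space of $X$, so Corollary~\ref{cor:key} applies with $A = X_0$ and yields
\[
\KK(\mathsf{pMod^{\cM}}(X)) \cong \KK(\mathsf{pMod^{\cM}}(X\setminus X_0)) \vee \KK(\mathsf{pMod^{\cM}}(X_0)).
\]
The point of removing $X_0$ is that both remaining stratified spaces have discrete entrance path category: $X_0$ is a finite disjoint union of points, while $X\setminus X_0$ is a finite disjoint union of open edges, and in each case any nonidentity morphism of $\mathsf{Ent}_\Delta$ would have to arise from a face relation whose lower face (a vertex) has just been deleted. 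Hence $\mathsf{Ent}_\Delta(X_0)$ and $\mathsf{Ent}_\Delta(X\setminus X_0)$ are coproducts of terminal categories, one summand per surviving stratum.

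It then remains to split each of these two wedge summands over its strata. For a discrete stratified space with strata $\{s_k\}$ the functor category decouples as a finite product, $\mathsf{pMod^{\cM}}\bigl(\coprod_k s_k\bigr) = \mathsf{Fun}\bigl(\coprod_k \ast,\cM\bigr) \cong \prod_k \cM$, and a finite product of Waldhausen categories is carried by $\KK$ to the corresponding finite wedge of spectra. Concretely, I would obtain this last fact by induction from the very same corollary: the projection off a single factor fits into a standard split short exact sequence whose kernel is the omitted copy of $\cM$, so Additivity peels off one $\KK(\cM)$ at a time. Applying this to $X_0$ gives $\bigvee_{x_0\in X_0}\KK(\cM)$ and to $X\setminus X_0$ gives $\bigvee_{x_1\in X_1}\KK(\cM)$; substituting $\KK(\cM)\cong \KK(\mathsf{pMod^{\cM}}(s))$ for each stratum $s$ and combining with the display above produces the claimed equivalence.

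The step I expect to require the most care is the verification, at each application of Corollary~\ref{cor:key}, that its hypotheses genuinely hold---that the relevant space is a cubical manifold and the chosen sub-stratified space is closed, so that \lemref{key} really does supply a \emph{standard} split short exact sequence of Waldhausen categories. In dimension one this is exactly where the geometry pays off: vertices are closed points and the edges of $X\setminus X_0$ are clopen intervals, so the pieces $X_0$, $X\setminus X_0$, and their one-stratum-smaller descendants remain (possibly lower-dimensional) cubical manifolds throughout the induction. Once this is secured, no further analysis of face relations is needed, since removing the vertices has already trivialized the entrance path categories.
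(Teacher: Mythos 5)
Your argument is correct, but it is organized differently from the proof the paper relies on; in fact the paper does not prove \lemref{stratadd} internally at all, but quotes \cite[Lemma 4.1.5]{grady2021zig}, whose proof inducts on the number of zero-strata, applying the closed-subspace additivity of \corref{key} to peel off one vertex at a time. You instead delete the entire vertex set $X_0$ in a single application of \corref{key} (legitimate: $X_0$ is a finite union of strata, each a closed point, hence a closed sub-stratified space of the cubical manifold $X$), and then exploit the fact that both remaining pieces have discrete entrance path categories, so that $\mathsf{pMod^{\cM}}(X_0) \cong \prod_{x_0 \in X_0} \cM$ and $\mathsf{pMod^{\cM}}(X\setminus X_0)\cong \prod_{x_1\in X_1}\cM$, and finite products of Waldhausen categories are sent by $\KK$ to finite wedges of spectra. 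This organization has a genuine advantage: the geometric corollary is applied only once, to an honest cubical manifold, whereas an iterated peeling must repeatedly invoke it on leftover pieces such as disjoint unions of open edges, which are not cubical manifolds in the paper's strict sense (they admit no cubulation by closed cubes inducing the given stratification). For the same reason, one caveat applies to your own write-up: the step where you ``peel off one $\KK(\cM)$ at a time'' should be read as \thmref{additivity} applied to the evident split short exact sequence $\prod_{k\neq k_0}\cM \to \prod_{k}\cM \to \cM$ of Waldhausen categories, whose standardness is verified exactly as in \lemref{key}, and not as \corref{key} applied to the space $X\setminus X_0$; with that reading the proof is complete. The trade-off is that the paper's vertex-by-vertex induction is the pattern that scales up to the induction on height used to prove \thmref{multistratadd}, while your all-at-once decomposition leans on a feature special to dimension one: removing $X_0$ renders every entrance path category discrete.
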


In proving the preceding result, the authors inducted on the number of zero-strata
of $X$. In our multi-parameter setting we  induct on an analogous notion: {\em height}.

\begin{definition}
    Let $\cI = \{I_i\}_{i=1}^{i=d}$ be a collection of finite subsets of $\R$. We say the \emph{height} of $\cI$ is the maximum
    number of objects in any
    $I_n \in \cI$.
    \end{definition}
    
\begin{definition}
    Given a cubical grid $d$-manifold $X$, we say the \emph{height} of $X$ is
    the minimum height over all collections $\cI$ such that $X \subseteq (\R^d; \cI)$.
    If the height of $X$ equals the number of objects in $I_n$,
    we say that the $n$th parameter \emph{realizes the height of $X$}.
\end{definition}

    It is clear from the definition that in the one-parameter case, the height of $X$ is indeed just the
    number of zero-strata of $X$. In the general $d$-parameter case, height is a
    measure of the longest axis-aligned ``slice,'' although note that the height of $X$
    may be realized in more than one parameter. 

We are now ready to state our main result, the $K$-theory of multi-parameter
 grid modules. The proof uses a double induction on the number of
parameters and on the height of the module.

\begin{theorem}[$K$-Theory of Multi-Parameter Zig-Zag
    Modules]
    \label{thm:multistratadd}
Let $X$ be a cubical grid~$d$-manifold with a finite number of strata.
  There is an  equivalence of spectra
    \[
        \KK(\mathsf{pMod^{\cM}} (X)) \cong \bigvee_{x_0 \in X_0} \KK(\cM) \vee \bigvee_{x_1 \in X_1} \KK(\cM) \vee
        \ldots \vee \bigvee_{x_d \in X_d} \KK(\cM)
    \]
    where $X_i$ is the set of $i$-strata of $X$.
\end{theorem}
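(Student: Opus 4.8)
The plan is to prove the statement by a double induction: an outer induction on the number of parameters $d$ and an inner induction on the height of $X$ (refined, when several parameters simultaneously realize the height, by the number of such parameters). The outer base case $d=1$ is exactly \lemref{stratadd}. For the outer inductive step I fix $d \ge 2$ and assume the theorem for every cubical grid $(d-1)$-manifold with finitely many strata; the whole argument for this fixed $d$ then runs through the inner induction. Throughout, the essential bookkeeping is that whenever Corollary~\ref{cor:key} splits $X$ into a closed piece $A$ and its open complement $X \setminus A$, the strata of $X$ partition into those of $A$ and those of $X \setminus A$, so a wedge decomposition into one copy of $\KK(\cM)$ per stratum propagates automatically across the split.

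For the inner induction, let $n$ realize the height $h$ of $X$ and write $I_n = \{v_1 < \cdots < v_h\}$ for a minimal stratifying set, so that, $X$ being compact, its extent in the $n$-th direction is $[v_1,v_h]$. I would apply Corollary~\ref{cor:key} to the compact cubical manifold $X$ with the closed sub-stratified space
\[
A := X \cap \{x_n \le v_{h-1}\},
\]
giving $\KK(\mathsf{pMod^{\cM}}(X)) \cong \KK(\mathsf{pMod^{\cM}}(X\setminus A)) \vee \KK(\mathsf{pMod^{\cM}}(A))$. When $h \ge 3$ the piece $A$ is again a compact cubical grid $d$-manifold, now with only $h-1$ grid values along the $n$-th axis, hence of strictly smaller height (or the same height but one fewer realizing parameter); the inner inductive hypothesis then identifies $\KK(\mathsf{pMod^{\cM}}(A))$ with the wedge over the strata of $A$. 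When $h = 2$ the intersection collapses to the bottom slice $X \cap \{x_n = v_1\}$, a cubical grid $(d-1)$-manifold, which is handled instead by the outer inductive hypothesis; this is the base of the inner induction.

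The open piece $X \setminus A = X \cap \{x_n > v_{h-1}\}$ carries the main computation. Because $X$ is cubical, the slab between the consecutive grid hyperplanes $x_n = v_{h-1}$ and $x_n = v_h$ is a product $\bar C \times [v_{h-1},v_h]$ with cross-section a cubical grid $(d-1)$-manifold $\bar C$, so the entrance path category factors as $\mathsf{Ent}_\Delta(X \setminus A) \cong \mathsf{Ent}_\Delta(\bar C) \times [1]$, the factor $[1]$ recording that the top vertex $v_h$ is a face of the open edge $(v_{h-1},v_h)$. Hence $\mathsf{pMod^{\cM}}(X\setminus A)$ is the arrow category $\mathsf{Fun}([1], \mathsf{pMod^{\cM}}(\bar C))$, and Waldhausen Additivity (\thmref{additivity}), applied to the split short exact sequence of Waldhausen categories given by the source and cofiber functors, yields $\KK(\mathsf{pMod^{\cM}}(X\setminus A)) \cong \KK(\mathsf{pMod^{\cM}}(\bar C)) \vee \KK(\mathsf{pMod^{\cM}}(\bar C))$. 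The outer inductive hypothesis rewrites each factor as a wedge over the strata of $\bar C$, and the two copies account exactly for the strata of $X \setminus A$ lying over the open edge and over the top vertex.

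Assembling the three contributions and matching summands stratum-by-stratum gives the claimed equivalence. I expect the main obstacle to be geometric bookkeeping rather than $K$-theory: one must verify that cutting a cubical grid $d$-manifold along a grid hyperplane again produces cubical grid manifolds (so that both the truncation $A$ and the cross-section $\bar C$ are legitimate inputs for the inductive hypotheses), that the top slab genuinely is a product so the entrance-path factorization through $[1]$ is valid, and that the refined inner induction on the pair (height, number of realizing parameters) is well-founded when several axes achieve the height at once. Once these structural points are secured, the additivity of strata counts under Corollary~\ref{cor:key} renders the inductive step essentially formal.
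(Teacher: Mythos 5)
Your overall skeleton --- a double induction on the number of parameters and on height, with \corref{key} as the gluing device and the observation that strata counts are additive across any split into a closed piece and its complement --- is the same as the paper's, but your two decompositions are genuinely different. For the step that reduces dimension, you cut off the top slab and use that $X \cap \{x_n > v_{h-1}\}$ is a product $\bar C \times (v_{h-1}, v_h]$, so that $\mathsf{pMod^{\cM}}(X \setminus A) \cong \mathsf{Fun}([1], \mathsf{pMod^{\cM}}(\bar C))$ splits by additivity into two copies of $\KK(\mathsf{pMod^{\cM}}(\bar C))$. The paper instead reduces (via its height induction) to a single $(d+1)$-cube of height two, deletes the open top cell, and decomposes the boundary using a ridge-unfolding theorem for cubes, so that the net is a grid $d$-manifold and the cut locus a grid $(d-2)$-manifold. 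Where your product argument applies it is more elementary and avoids the unfolding machinery entirely; the only thing to supply is the additivity statement for $\mathsf{Fun}([1],-)$ with target the functor Waldhausen category $\mathsf{pMod^{\cM}}(\bar C)$ rather than $\cM$ itself, which is the same proof as \lemref{key}. Your lexicographic refinement by the number of height-realizing parameters plays the role of the paper's stack algorithm and is well-founded.

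There is, however, a concrete gap in your inner inductive step for $h \ge 3$: the closed piece $A = X \cap \{x_n \le v_{h-1}\}$ need not be a cubical grid manifold, so neither inductive hypothesis applies to it. Take $X = ([0,1]\times[0,2]) \cup ([1,2]\times[1,2])$, a cubical grid $2$-manifold of height $3$; cutting with $n=2$ and $v_{h-1}=1$ gives $A = ([0,1]\times[0,1]) \cup ([1,2]\times\{1\})$, a square with a one-dimensional whisker wherever the top slab overhangs the lower part of $X$. This is exactly the ``geometric bookkeeping'' you flagged as an expected obstacle, and as stated it fails: $A$ is not a manifold of any dimension. Possible repairs are to strengthen the induction so that it applies to arbitrary closed substratified subspaces of $(\R^d;\cI)$ (finite cubical complexes, for which whiskers are harmless), to peel the whisker off first by a further application of \corref{key}, or to follow the paper and remove an interior level set $X \cap \{x_i = m\}$, which keeps the removed piece purely $(d-1)$-dimensional at the cost of then needing a separate argument (the unfolding, in the paper's case) for the dimension-reduction step.
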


\begin{proof}
    We proceed by double induction; first on $h$, the height of the persistence
    module, and then on $d$, the number of parameters.
    As a base case, we observe that \lemref{stratadd} asserts the statment
    holds for $d=1$ and all $h$. 
    Suppose that there exist $d_*, h_* \in \NN$ so that the claim holds for all
    $1 \leq d \leq d_*$ and $1 < h \leq h_*$.
    \footnote{Here, we make the inequality $h>1$ strict to avoid
    tautologies; a
    cubical manifold with height one is simply a point.}

    \textbf{Induction on Height:}
    First, we induct on the height of $X$.  Suppose that $X$ is
    $d_*$-dimensional and has height $h_* + 1$. For simplicity, we first
    consider the case that only one parameter realizes the height $h_* +1$.  and
    suppose this parameter is \emph{not} the $i$th parameter for some $i \in [1,d]$.
    Then, for some $m \in I_i$, consider the closed $(d_*-1)$-parameter
    sub-stratified space $A$ corresponding to the poset $I_1 \times \ldots
    \times I_{i-1} \times m \times I_{i+1} \times \ldots I_d$.
    Conceptually,~$A$ corresponds to a level-set of $X$ at height $m$, slicing
    through the parameter that realizes the maximum height.  Note that $X
    \setminus A$ is generally two disconnected $d_*$-dimensional spaces (or one
    $d_*$-dimensional space in the specific case that $A$ is on the boundary of
    $X$), each with height no more than $h_*$.  Furthermore, note that $A$ has
    height no more than $h_*$. By the inductive hypothesis, the claim holds for
    the connected components of $X \setminus A$ as well as for~$A$, so by
    \corref{key}, we see that the claim holds for all of $X$.

    Next, consider the general case, i.e., the case that any number of parameters realize the height $h_*+1$.
    We describe the process of dividing $X$ into connected components each with
    height no more than $h_*$ algorithmically, using a stack data structure.
    Recall that, just like a stack of plates, a stack utilizes a ``first on,
    first off'' organization, where the element that was most recently pushed
    onto the stack is the element available to be popped off.
    Specifically, we use a stack, $T$, of ``tall''
    connected components of this division that have height $h_*+1$, initialized to
    $T = X$. We keep track of a list $S$ of ``short'' connected components that
    have height less than $h_*+1$, initialized as empty.  The procedure is as
    follows. First, we pop $X_i \in T$. Then, we choose a closed $(\dim(X_i)-1)$-parameter
    sub-stratified space,~$A_i \subset X_i$, that is perpendicular to some parameter of
    $X_i$ that realizes height $h_*+1$. Note then that the connected components
    of $X_i \setminus A_i$ and
    $A_i$ may still have height $h_*+1$, but in one fewer parameter than $X_i$
    had height $h_*+1$.  We push the connected components of $X_i
    \setminus A_i$ and $A_i$ that have height $h_*+1$ back on the stack $T$ and
    move any
    connected components of this division with height less than $h_*+1$ to our
    list $S$. Since each processed element of $T$ has height $h_*+1$ in one
    fewer parameter direction
    than before it was processed, we eventually have $T$ empty and $S$ a
    division of $X$ into cubical grid manifolds for modules each with height less than
    $h_*+1$.
    Noting that each time a sub-stratified space was removed, it was a closed
    subspace of the cubical manifold containing it, we ``glue'' back the pieces
    using \corref{key}, eventually showing the $K$-theory for persistence
    modules over all of $X$ is as claimed.


    \textbf{Induction on Number of Parameters:} Next, we induct on the number of
    parameters. From our preceeding inductive argument, it is sufficeint to let $X'$ be a cubical grid $(d_*
    +1)$-manifold with height two. This means $X'$ is a $(d_* +1)$-cube.
    Let~$\cube$ denote the $(d_*+1)$-stratum of this cube (i.e., the interior).
    Since $X' \setminus \cube$ is a closed substratified space of $X'$, we know
    by \corref{key} that we have an equivalence of spectra
    \begin{align}
        \KK(\mathsf{pMod^{\cM}} (X')) \cong \KK(\mathsf{pMod^{\cM}} (\cube))
        \vee \KK(\mathsf{pMod^{\cM}}
        (X' \setminus \cube)).
        \label{eqn:removeguts}
    \end{align}
    (Note the slight flip of roles and notation from \corref{key}; our closed
    subspace is $A = X' \setminus \cube$ and thus, $X' \setminus A = X' \setminus
    (X' \setminus \cube) = \cube$).

    It remains, then, to compute the $K$-theory of persistence modules over $X'
    \setminus \cube$. The space $X' \setminus \cube$ has the geometric structure of
    the boundary of a $(d_*+1)$-cube, or, equivalently, a cube with no interior.
    By~\cite[Theorem 3]{cubes}, every unfolding of a cube with empty interior
    along a connected collection of codimension-two faces will not self-overlap
    and will lie on grid points of space in one dimension lower (i.e., every
    \emph{ridge unfolding} of a finite cube will produce a
    \emph{net})\footnote{In~\cite{cubes}, the word ``cube'' is taken
    to mean a cube with empty
    interior.}. 

	As discussed in~\cite{cubes}, ridge unfoldings correspond to trees
     in the cube's dual. A path in this dual
    corresponds to an unfolding where codimension-two faces are only pairwise
    adjacent to one other. This pairwise adjacency is along a common
    codimension-three face, so that the collection of codimension-two faces
    corresponds to a submanifold (with boundary) of the cube.
    Choose some such unfolding of the $(d_*
    +1)$-cube, denoting the corresponding closed and connected collection of
    codimension-two faces (i.e., $(d_*-1)$-faces) by $U$.  Since the unfolding
    forms a net, it embeds into some $(\R^{d_*}; \cI)$, so 
    $(X'\setminus \cube) \setminus U$ may be no more than a
    cubical grid $d_*$-manifold. Since $U$ corresponds to a path in the dual graph of the
    cube, $U$ is a cubical grid $(d_* -2)$-manifold. 
    (see \figref{cubeunfolding}). Thus, by the
    inductive hypothesis, the claim holds for $(X' \setminus \cube) \setminus U$
    as well as for $U$.  Then by \corref{key}, we see that the claim holds for
    all of $X' \setminus \cube$.  Combining this result with the
    equivalence in \eqnref{removeguts}, we have shown the desired result holds
    for all of $X$.

    Next, we observe that since we have shown that the claim holding for a cubical
    grid $d$-manifold implies the claim holds for a cubical
    $(d+1)$-manifold with height two, and since we have furthermore shown the
    claim holds for cubical grid manifolds of any height, we have shown the
    desired result in generality.
    
    Finally, we identify the $K$-theory of components of the stratification, i.e., we
    identify $\KK(\mathsf{pMod^{\cM}} (x_i))$ for $x_i \in X_i$ and $i \in
    \{1, \ldots, d\}$. By \defref{kofzz}, we have $\KK(\mathsf{pMod^{\cM}} (x_i)) =
\KK(\mathsf{Fun} (\mathsf{Ent}_\Delta (x_i),\cM))$. Since
$\mathsf{Ent}_\Delta(x_i)$ is the terminal category (a single object and an
identity morphism), $\mathsf{Fun} (\mathsf{Ent}_\Delta (x_i),\cM)$ is
isomorphic to the category of $\cM$ itself. Thus,
$\KK(\mathsf{pMod^{\cM}} (x_i)) = \KK(\cM)$. 
\end{proof}

\begin{figure}
    \centering
    \includegraphics[scale=.8]{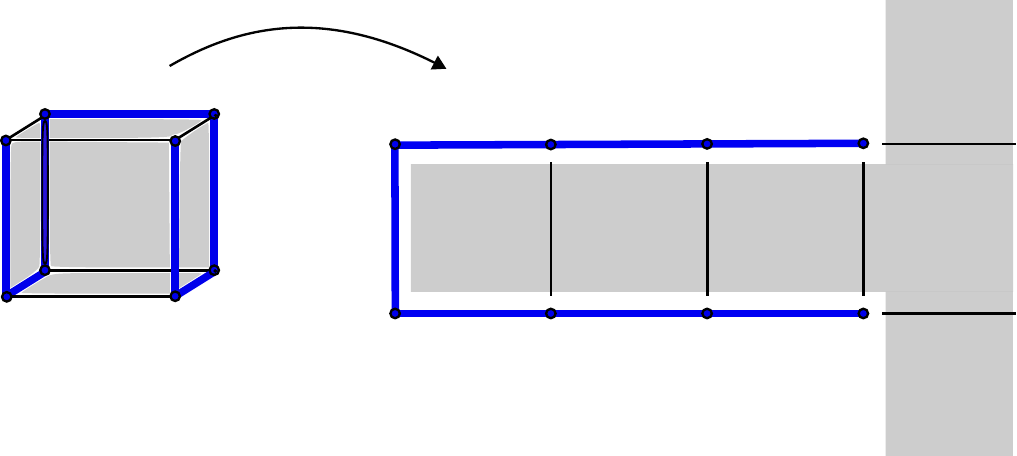}
    \caption{The unfolding of a three-dimensional cube $C$ with height two in
    each parameter (an instance of the cube $X' \setminus \cube$ discussed in
    the induction on height in the proof of
    \thmref{multistratadd}). The thick
    blue submodule $U$ is a closed connected collection of codimension-two faces
    of the cube along which we can unfold (right). Since the unfolding (left) is a net,
    connected components of both the $C \setminus U$ and $U$ cannot be more than
    two-parameter modules.}\label{fig:cubeunfolding}
\end{figure}

We end this section by discussing how \thmref{multistratadd} translates to the
specific case of $\Vect$-valued multi-parameter persistence modules. 

\begin{theorem}[$K$-theory of $\Vect$-Valued Multi-Parameter Modules]\label{thm:bigOplus}
Let $X$ be a cubical grid $d$-manifold with a finite number of strata.
There is an  equivalence
of spectra
\[
\KK(\mathsf{pMod^{\Vect_\FF}} (X)) \cong \bigvee_{X_0} \KK (\FF) \vee
\bigvee_{X_1} \KK (\FF) \vee \ldots \vee \bigvee_{X_d} \KK (\FF)
\]
where $X_i$ is the set of $i$-strata of $X$ and $\KK (\FF)$ denotes the
$K$-theory spectrum of the field $\FF$.
\end{theorem}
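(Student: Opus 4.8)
The plan is to obtain this statement as an immediate specialization of \thmref{multistratadd}, with essentially no new mathematical content beyond identifying the relevant categories. The crucial observation is that a field $\FF$ is in particular a commutative ring, so \thmref{multistratadd} applies verbatim with $R = \FF$. Over a field every module is free, hence projective, and finite generation coincides with finite-dimensionality; consequently the Waldhausen category $\cM$ of finitely generated projective $\FF$-modules is precisely the category $\Vect_\FF$ of finite-dimensional $\FF$-vector spaces. This is the content of the paper's standing convention (cf.\ the introduction) that $\Vect_\FF$-valued persistence modules take values in \emph{finite-dimensional} vector spaces.

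First I would verify that this is an identification of Waldhausen categories, and not merely of their underlying categories---that is, that the cofibrations and weak equivalences agree under the correspondence. This is immediate: admissible monomorphisms of finitely generated projective $\FF$-modules are exactly injections of finite-dimensional vector spaces, each of which splits since $\FF$ is a field, and the weak equivalences are the isomorphisms in both descriptions. Having established $\cM = \Vect_\FF$, I would apply \thmref{multistratadd} directly to obtain the equivalence of spectra
\[
\KK(\mathsf{pMod^{\Vect_\FF}} (X)) \cong \bigvee_{x_0 \in X_0} \KK(\Vect_\FF) \vee \ldots \vee \bigvee_{x_d \in X_d} \KK(\Vect_\FF).
\]

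Finally, I would invoke the definition of the $K$-theory of a field: the spectrum $\KK(\FF)$ is, by definition, the $K$-theory spectrum of the category of finitely generated projective $\FF$-modules, which is exactly $\KK(\Vect_\FF)$. Substituting $\KK(\Vect_\FF) = \KK(\FF)$ into each wedge summand yields the stated formula. There is no substantive obstacle in this argument: all of the mathematical work is carried by \thmref{multistratadd}, and the present theorem is simply its translation into the language of vector-space-valued persistence modules. The only point requiring any attention is the bookkeeping identification $\KK(\cM) = \KK(\FF)$, and this is definitional.
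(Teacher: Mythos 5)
Your proposal is correct and matches the paper's own proof: both specialize \thmref{multistratadd} to $R=\FF$, identify finitely generated projective $\FF$-modules with finite-dimensional $\FF$-vector spaces, and conclude via $\KK(\Vect_\FF)=\KK(\FF)$. The extra check that the Waldhausen structures agree is a harmless elaboration of what the paper treats as immediate.
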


\begin{proof}
Now, the category of
finite dimensional vector spaces over $\FF$ is exactly the category of finitely
generated projective modules over $\FF$ (considered as a ring). Hence,
$\KK(\Vect_\FF)$ is just the algebraic $K$-theory of $\FF$.

Thus, we have shown the $K$-theory of each strata is a copy of $K(\FF)$. We
    know by \thmref{multistratadd} that $\KK(\mathsf{pMod^{\Vect_\FF}} (X))$ is
additive over strata, so the result follows.
\end{proof}

The first two $K$-groups of a field are well known.  The following isomorphisms are induced by the dimension and determinant maps, respectively. 

\begin{corollary}
For $X$, a cubical grid $d$-manifold with a finite number of strata, we have
\[
K_0(\mathsf{pMod^{\Vect_\FF}} (X)) \cong \bigoplus_{X_0} \ZZ \oplus
    \bigoplus_{X_1} \ZZ \oplus \ldots \oplus \bigoplus_{X_d} \ZZ
\]
and
\[
K_1(\mathsf{pMod^{\Vect_\FF}} (X)) \cong \bigoplus_{X_0} \FF^\times \oplus
\bigoplus_{X_1} \FF^\times \oplus \ldots \oplus \bigoplus_{X_d} \FF^\times
\]
where $X_i$ is the set of $i$-strata of $X$ and $\FF^\times$ is the group of
units of $\FF$.
\end{corollary}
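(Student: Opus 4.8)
The plan is to take homotopy groups of the equivalence of spectra furnished by \thmref{bigOplus} and then substitute the classical values of the first two $K$-groups of a field. Recall that, by definition, $K_n(\mathcal{C}) = \pi_n \KK(\mathcal{C})$, so the two groups in question are precisely $\pi_0$ and $\pi_1$ of the spectrum $\KK(\mathsf{pMod^{\Vect_\FF}}(X))$.

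First I would apply $\pi_n$ to the equivalence
\[
\KK(\mathsf{pMod^{\Vect_\FF}} (X)) \simeq \bigvee_{X_0} \KK (\FF) \vee \bigvee_{X_1} \KK (\FF) \vee \ldots \vee \bigvee_{X_d} \KK (\FF).
\]
Because $X$ has only finitely many strata, the wedge above is a \emph{finite} wedge of spectra. In the stable setting a finite wedge agrees with the finite product, and stable homotopy groups commute with finite products; hence for every $n$ we obtain
\[
K_n(\mathsf{pMod^{\Vect_\FF}}(X)) \cong \bigoplus_{X_0} K_n(\FF) \oplus \bigoplus_{X_1} K_n(\FF) \oplus \ldots \oplus \bigoplus_{X_d} K_n(\FF),
\]
and in particular this holds for $n = 0$ and $n = 1$.

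Next I would insert the well-known low-degree $K$-groups of a field. The dimension (rank) function on finite-dimensional $\FF$-vector spaces induces the isomorphism $K_0(\FF) \cong \ZZ$, and the determinant induces the isomorphism $K_1(\FF) \cong \FF^\times$, via the stabilized map $\mathrm{GL}(\FF) \to \FF^\times$ onto the abelianization, the vanishing of $SK_1$ for a field being classical. Substituting $K_0(\FF) = \ZZ$ and $K_1(\FF) = \FF^\times$ into the displayed direct-sum decomposition produces exactly the two asserted formulas, with the strata-wise isomorphisms induced by dimension and determinant respectively, as claimed in the surrounding text.

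The hard part here is essentially nonexistent: all of the analytic content is carried by \thmref{bigOplus}, and this corollary is a formal consequence of that equivalence together with the computation of $K_0$ and $K_1$ of a field. The one point deserving a word of care is the interchange of $\pi_n$ with the wedge, which is clean precisely because the stratification is finite; were the index set infinite one would instead invoke that a coproduct of spectra is a filtered colimit of its finite sub-coproducts and that stable homotopy commutes with filtered colimits, but finiteness of the set of strata renders this unnecessary.
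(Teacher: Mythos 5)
Your proposal is correct and matches the paper's (implicit) argument exactly: the paper treats this corollary as an immediate consequence of \thmref{bigOplus} together with the classical facts that $K_0(\FF) \cong \ZZ$ and $K_1(\FF) \cong \FF^\times$ via the dimension and determinant maps. Your only addition is to spell out the routine step that homotopy groups take the finite wedge of spectra to a finite direct sum, which the paper leaves unstated.
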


See Chapter IV of \cite{Weibel} for an in-depth description of the higher $K$-theory of fields.

\section{Connections: Euler Manifolds and Rank Exact $K$-Theory}
\label{sec:connections}

As observed by Grady and Schenfisch in the one-dimensional case \cite{grady2021zig}, given a persistence module, $\cF$, over a parameter space, $X$, the class of $\cF$ in $K_0$ is the Euler curve of the persistence module.  The same conclusion holds in the multiparameter setting with the exception that we no longer consider the Euler curve, but rather the Euler surface or manifold depending on the number of parameters. Indeed, the description of $K_0$ in terms of constructible functions goes back to Kashiwara and Schapira \cite{KS}, and the construction of their isomorphism uses a local Euler index.

\subsection{Rank Exact $K$-Theory}\label{sect:rankexact}

Finally, we briefly discuss the relationship between our present results and the recent work of Botnan, Oppermann, and Oudot on Grothendieck groups, $K_0$, in multiparameter persistence via rank-exact structures \cite{BOO}.

Let $\FF$ be a field and $\cP$ be an arbitrary poset. Let $\mathsf{Rep}(\cP)$ denote the category of functors~$\cP \to \Vect_\FF$ with finite total rank.  Note, we use rank, $\mathop{\rm Rk}$, instead of dimension (over $\FF$), as there are many other choices of target category to which the work of \cite{BOO} applies. Of course, in our simplified setting $\mathop{\rm Rk}=\dim_\FF$.

\begin{definition}[Definition 4.1 \cite{BOO}]
A short exact sequence $0 \to \sF \to \sG \to \sH \to 0$ in $\mathsf{Rep}(\cP)$ is \em{rank-exact} if $\mathop{\rm Rk} \sG = \mathop{\rm Rk} \sF + \mathop{\rm Rk} \sH$.
\end{definition}

Theorem 4.4 of \cite{BOO} states that $\mathsf{Rep}(\cP)$ equipped with rank-exact short exact sequences is an exact category, which we denote $\mathsf{Rep}(\cP)_{\mathop{\rm Rk}}$.  Hence, $K_0 (\mathsf{Rep}(\cP)_{\mathop{\rm Rk}})$ is well-defined.  The higher $K$-groups also exist, but following the authors of {\em loc.~cit.}~we restrict our discussion to $K_0$.

For $\cP$ a poset, let $\mathop{\rm Seg}(\cP)$ be the collection of \emph{segments}, i.e., pairs defining the partial order on the underlying set of $\cP$, i.e., 
\[
\mathop{\rm Seg}(\cP) = \{ (p,p')\in \cP \times \cP : p \le p'\}.
\]

\begin{theorem}[Theorem 4.10 \cite{BOO}]
Let $\cP$ be a finite poset.  The rank map induces an isomorphism
\[
\mathop{\rm Rk} \colon K_0 (\mathsf{Rep}(\cP)_{\mathop{\rm Rk}}) \xrightarrow{\; \; \approx \; \; } \ZZ^{\mathop{\rm Seg}(\cP)}.
\]
\end{theorem}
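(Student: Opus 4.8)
The plan is to show that $\mathop{\rm Rk}$ is a well-defined homomorphism and then prove surjectivity and injectivity separately, with injectivity being the crux. Throughout I write $\operatorname{rk}(\cdot)$ for the rank of a single linear map, reserving $\mathop{\rm Rk}$ for the segment-indexed rank invariant.

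\textbf{Well-definedness and a superadditivity lemma.} Since rank-exactness is \emph{defined} by the relation $\mathop{\rm Rk}\sG=\mathop{\rm Rk}\sF+\mathop{\rm Rk}\sH$, the rank invariant respects every defining relation of $K_0(\mathsf{Rep}(\cP)_{\mathop{\rm Rk}})$, so $\mathop{\rm Rk}$ descends to a homomorphism on $K_0$. The structurally useful fact, which I would record first, is that this equality is \emph{automatic as an inequality}: for any short exact sequence $0\to\sF\to\sG\to\sH\to 0$ and any segment $(p,p')$, choosing vector-space splittings at $p$ and $p'$ expresses the structure map of $\sG$ in block-triangular form
\[
\sG(p\le p')=\begin{pmatrix}\phi_\sF & \gamma \\ 0 & \phi_\sH\end{pmatrix},\qquad \gamma\colon\sH(p)\to\sF(p'),
\]
whence $\operatorname{rk}(\sG(p\le p'))\ge \operatorname{rk}(\sF(p\le p'))+\operatorname{rk}(\sH(p\le p'))$. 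Thus $\mathop{\rm Rk}$ is superadditive on \emph{all} short exact sequences, and a sequence is rank-exact exactly when equality holds segment by segment, i.e.\ when the gluing datum $\gamma$ creates no extra rank anywhere. This characterization is what I would lean on for injectivity.

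\textbf{Surjectivity via interval modules and M\"obius inversion.} Order $\mathop{\rm Seg}(\cP)$ by containment, $(q,q')\sqsubseteq(a,b)$ iff $a\le q$ and $q'\le b$. For each $t=(a,b)$ let $N_t$ be the interval module, equal to $\FF$ on $\{p:a\le p\le b\}$ with identity structure maps and $0$ elsewhere. A direct check gives $\mathop{\rm Rk}(N_t)=\mathbf 1_{\mathord{\downarrow}t}$, the indicator of the principal order ideal of $t$. The matrix with entries $[s\sqsubseteq t]$ is unitriangular with respect to any linear extension of $\sqsubseteq$, hence invertible over $\ZZ$; this is precisely M\"obius inversion on $(\mathop{\rm Seg}(\cP),\sqsubseteq)$. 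Consequently $\{\mathbf 1_{\mathord{\downarrow}t}\}_t$ is a $\ZZ$-basis of $\ZZ^{\mathop{\rm Seg}(\cP)}$, so $\mathop{\rm Rk}$ is surjective; moreover $\mathbf 1_{\mathord{\downarrow}t}\mapsto[N_t]$ defines a section $\Phi$ with $\mathop{\rm Rk}\circ\Phi=\mathrm{id}$.

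\textbf{Injectivity as a generation statement.} The section splits the sequence, giving $K_0\cong\ZZ^{\mathop{\rm Seg}(\cP)}\oplus\ker\mathop{\rm Rk}$, so injectivity is equivalent to $\Phi$ being surjective, i.e.\ to the classes $\{[N_t]\}$ generating $K_0$. I would prove generation by induction on the total rank $\sum_{s}\operatorname{rk}(\sF(s))$: for $\sF\neq 0$, produce a rank-exact short exact sequence $0\to N_t\to\sF\to\sF''\to 0$ (or its dual) with $\sF''$ of strictly smaller total rank, so that additivity yields $[\sF]=[N_t]+[\sF'']$ and induction closes the loop. Equivalently, this amounts to exhibiting for every module a \emph{rank-exact} filtration whose subquotients are interval modules.

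\textbf{The main obstacle.} The hard part is producing these rank-exact interval layers for an \emph{arbitrary} finite poset, where wild representation type rules out any appeal to Krull--Schmidt decompositions or barcode-type normal forms. Here the superadditivity criterion is the essential lever: one must select an interval $t$ together with an embedding $N_t\hookrightarrow\sF$ that tracks a genuine rank jump, so that the induced gluing $\gamma$ contributes no rank at any segment and segmentwise equality is forced. Verifying that such rank-tight interval layers always exist, and that peeling them off is compatible with the rank-exact structure, is the technical heart of the argument; once it is secured, the total-rank induction together with the M\"obius bookkeeping of the previous step assembles into the claimed isomorphism $\mathop{\rm Rk}\colon K_0(\mathsf{Rep}(\cP)_{\mathop{\rm Rk}})\xrightarrow{\;\approx\;}\ZZ^{\mathop{\rm Seg}(\cP)}$.
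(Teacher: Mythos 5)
First, a point of comparison: the paper offers no proof of this statement at all --- it is quoted verbatim as Theorem 4.10 of \cite{BOO} and used as a black box --- so your attempt can only be measured against what the theorem actually requires. On that score, your first three steps are correct and essentially the standard development: well-definedness is immediate from the definition of rank-exactness; the block-triangular superadditivity of the rank invariant over arbitrary short exact sequences is true and is the right characterization of when a sequence is rank-exact; and the surjectivity argument is complete --- the set $\{p : a\le p\le b\}$ is order-convex, so $N_{(a,b)}$ is a genuine representation, its rank invariant is the indicator of the principal ideal of $(a,b)$ in the containment order on $\mathop{\rm Seg}(\cP)$, and unitriangularity of the zeta matrix makes these indicators a $\ZZ$-basis. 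The reduction of injectivity to the statement that the classes $[N_t]$ generate $K_0(\mathsf{Rep}(\cP)_{\mathop{\rm Rk}})$ is also correct.

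The gap is the generation statement itself, which you acknowledge but which is the entire content of the theorem --- and, more seriously, the specific mechanism you propose for closing it cannot work. If every nonzero $\sF$ sat in a rank-exact short exact sequence with an interval module $N_t$ as sub \emph{or} quotient and a complementary term of smaller total rank, then your induction would express $\mathop{\rm Rk}(\sF)$ as a sum of indicators $\mathbf{1}_{\downarrow t}$ in which every coefficient is $+1$, since peeling off a sub and peeling off a quotient both contribute positively. But a central point of \cite{BOO} is that the rank invariant of a multiparameter module need \emph{not} be a nonnegative combination of rank invariants of segment modules --- this is precisely why their barcodes are \emph{signed} --- so the rank-exact interval filtrations you posit do not exist for such modules. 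The argument in \cite{BOO} instead obtains generation through rank-exact \emph{resolutions}: one realizes $[\sF]$ as an alternating sum $[I_0]-[I_1]+\dotsb$ of classes of direct sums of segment modules, which accommodates negative coefficients. Replacing your ``rank-tight interval layers'' by such resolutions, and proving that they exist and terminate for an arbitrary finite poset, is the missing technical core; as written, your induction step is not merely unproven but aimed at a statement that is false in general.
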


Consider the linear order, $[2]=\{0\le1\le2\}$.  Following \cite{grady2021zig}, the poset $[2]$ defines a cubical one-manifold with three zero-strata and two one-strata, which we denote by $\cX([2])$.  By the additivity results of Grady and Schenfisch, or the one-parameter specialization of the results above, $K_0(\mathsf{pMod^{\Vect}} (\cX([2]))) \cong \ZZ^5$.   However, note that $\lvert \mathop{\rm Seg}([2]) \rvert = 6,$ so $K_0 (\mathsf{Rep}([2])_{\mathop{\rm Rk}}) \cong \ZZ^6$.  Of course, $5 \neq 6$, so we seek an explanation/explicit comparison.



Let $\cP$ be a finite poset. The {\em order complex}, $\le^{\cP}_\bullet$, is the abstract simplicial complex whose faces are chains in $\cP$.  Let $\lvert \cP \rvert := \lvert \le^{\cP}_\bullet \rvert$ denote the geometric realization of $\cP$, which is a (geometric) simplicial complex. 
It is a standard exercise that $\lvert [n] \rvert \cong \Delta^n$, i.e., the realization of the linear order $\{0\le1 \le \dotsb \le n\}$ is the standard $n$-simplex.  Returning to the previous paragraph, note that the cubical one-manifold $\cX([2])$ is a subcomplex of $\Delta^2=\lvert [2] \rvert$; it sits inside as the {\em spine} of the 2-simplex. Note further that there is a natural bijection between $\mathop{\rm Seg}([2])$ and the set of simplices of the one skeleton of $\Delta^2$.  Indeed, this bijection is determined by considering the chains of length at most one in $[2]$, i.e., the one skeleton of the order complex, $\mathop{{\rm sk}_1} \le^{[2]}_\bullet$.  We conclude that the inclusion 
\[
\cX([2])=\mathop{\rm spine} \Delta^2 \hookrightarrow \mathop{{\rm sk}_1} \Delta^2
\]
induces a projection map $K_0 (\mathsf{Rep}([2])_{\mathop{\rm Rk}}) \cong \ZZ^6 \to \ZZ^5 \cong K_0(\mathsf{pMod^{\Vect}} (\cX([2])))$. This argument immediately generalizes to any finite linear order, and we have the following.

\begin{prop}
For each $n \in \NN$, the inclusion of the spine into the one skeleton of the $n$-simplex induces a projection
\[
K_0 (\mathsf{Rep}([n])_{\mathop{\rm Rk}}) \cong \ZZ^{\frac{n(n+1)}{2}} \to \ZZ^{2n-1} \cong K_0(\mathsf{pMod^{\Vect}} (\cX([n]))).
\]
\end{prop}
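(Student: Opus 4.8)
The plan is to show that the $n=2$ computation carried out just above is an instance of a uniform argument, and then to run that argument for general $n$. First I would fix the two free abelian groups under comparison. On the rank-exact side, \cite[Theorem 4.10]{BOO} gives a canonical isomorphism $K_0(\mathsf{Rep}([n])_{\mathop{\rm Rk}}) \cong \ZZ^{\mathop{\rm Seg}([n])}$, with one basis vector per segment. Since $[n]$ is a linear order, each segment is either a reflexive pair $(p,p)$ or a strict pair $(p,p')$ with $p<p'$, and a direct enumeration of these two families recovers the rank on the left. On the persistence side, $\cX([n])$ is a path, hence a cubical one-manifold, so the $\Vect$-valued additivity result \thmref{bigOplus} gives $K_0(\mathsf{pMod^{\Vect}}(\cX([n]))) \cong \bigoplus_{X_0}\ZZ \oplus \bigoplus_{X_1}\ZZ$, one basis vector per zero- or one-stratum; counting the vertices and edges of the path recovers the rank on the right.

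Second, I would record the combinatorial dictionary between these two indexing sets via the order complex. By construction, the simplices of the one skeleton $\mathop{{\rm sk}_1}\le^{[n]}_\bullet$ are exactly the chains of length at most one in $[n]$: the vertices are single elements and the edges are comparable pairs. This is precisely the bijection $\mathop{\rm Seg}([n]) \leftrightarrow \{\text{simplices of }\mathop{{\rm sk}_1}\Delta^n\}$ sending reflexive segments to vertices and strict segments to edges, so the basis of $\ZZ^{\mathop{\rm Seg}([n])}$ may be reindexed by the simplices of $\mathop{{\rm sk}_1}\Delta^n$.

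Third, I would realize the spine as a distinguished subcomplex. The spine $\cX([n]) = \mathop{\rm spine}\Delta^n$ sits inside $\mathop{{\rm sk}_1}\Delta^n$ as the subcomplex consisting of all vertices together with the consecutive edges $\{i,i+1\}$ and omitting every ``long'' edge $\{i,j\}$ with $j>i+1$. Thus the strata of the spine form a distinguished subset of the simplices of $\mathop{{\rm sk}_1}\Delta^n$, equivalently a subset of $\mathop{\rm Seg}([n])$. The inclusion of these index sets then yields a coordinate projection $\ZZ^{\mathop{\rm Seg}([n])} \to \ZZ^{\text{strata of }\cX([n])}$ which discards exactly the coordinates indexed by the long segments; it is visibly surjective, and comparing the two basis counts yields the displayed ranks. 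For $n=2$ this is the projection $\ZZ^6 \to \ZZ^5$ that forgets the segment $(0,2)$.

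The step I expect to be the genuine obstacle is justifying the word ``induces,'' that is, checking that restriction along $\cX([n]) \hookrightarrow \mathop{{\rm sk}_1}\Delta^n$ actually realizes the coordinate projection under the two a priori unrelated isomorphisms---the rank map of \cite{BOO} and the dimension/Euler-index map underlying \thmref{bigOplus}---rather than merely producing a surjection of the correct rank. My approach would be to exploit that both isomorphisms are local: each assigns to a module a tuple of values indexed by simplices (a rank on each segment on one side, a local Euler index at each stratum on the other), so that passing to the spine simply restricts the tuple to the sub-collection indexed by spine simplices. Verifying this compatibility simplex-by-simplex is the one non-formal point; it is exactly the content of the hand computation done above for $n=2$, now carried out uniformly in $n$.
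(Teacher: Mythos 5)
Your argument is correct and is essentially the paper's own proof: the paper works out $n=2$ explicitly---identifying $\mathop{\rm Seg}([2])$ with the simplices of $\mathop{{\rm sk}_1}\Delta^2$ via chains of length at most one and realizing the spine as the subcomplex omitting the long edges---and then declares that this "immediately generalizes," which is exactly the uniform coordinate-projection argument you spell out (your final paragraph on why the projection is genuinely \emph{induced}, via the locality of the rank and Euler-index bases, is a point the paper leaves implicit). One bookkeeping caveat: under the paper's convention $[n]=\{0\le 1\le\dotsb\le n\}$ your counts give $\tfrac{(n+1)(n+2)}{2}$ segments and $2n+1$ strata (e.g.\ $6$ and $5$ for $n=2$, matching the paper's worked example), whereas the exponents displayed in the proposition correspond to a linear order with $n$ elements; your enumeration is the internally consistent one, so you should not expect it to reproduce the displayed ranks literally.
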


Similarly, we can analyze the case of a grid as well. Let $\cG = [n_1]\times [n_2] \times \dotsb \times [n_d]$ be a finite grid equipped with the (categorical) product poset structure.  Again, we have a natural bijection $\mathop{\rm Seg}(\cG) \cong \mathop{{\rm sk}_1} \le^{\cG}_\bullet$. Next, note that strata of $\cX(\cG)$ are indexed by the Cartesian product of sets $\mathop{\rm spine} \lvert [n_1] \rvert \times \mathop{\rm spine} \lvert [n_2] \rvert \times \dotsb \times \mathop{\rm spine} \lvert [n_d] \rvert$, where $\cX(\cG)$ is the $d$-dimensional cubical grid manifold associated to $\cG$. Now, for each $k$ and each edge or vertex $\alpha \in \mathop{\rm spine} \lvert [n_k] \rvert$, there is a {\em source} $s(\alpha) \in \cG$ and a {\em target} $t(\alpha) \in \cG$.  There is an injection of sets
\[ 
\iota \colon \mathop{\rm spine} \lvert [n_1] \rvert \times \mathop{\rm spine} \lvert [n_2] \rvert \times \dotsb \times \mathop{\rm spine} \lvert [n_d] \rvert
 \hookrightarrow \mathop{\rm Seg}(\cG)
\]
given by
\[
\iota(\alpha_1, \alpha_2 , \dotsc , \alpha_d) = ((s(\alpha_1), s(\alpha_2), \dotsc, s(\alpha_d)), (t(\alpha_1), t(\alpha_2), \dotsc, t(\alpha_d))).
\]
Geometrically, the map $\iota$ exhibits $\cX(\cG)$ as a coarsening of a subcomplex of $\lvert \cG \rvert$. As before we obtain a projection map at the level of $K$-groups.

\begin{prop}
Let $\cG$ be a finite grid.  The map $\iota$ induces a projection
\[
K_0 (\mathsf{Rep}(\cG)_{\mathop{\rm Rk}}) \to K_0(\mathsf{pMod^{\Vect}} (\cX(\cG)).
\]
\end{prop}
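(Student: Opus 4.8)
The plan is to reduce the statement to an elementary fact about free abelian groups on finite sets, after identifying both $K_0$-groups concretely. First I would invoke Theorem 4.10 of \cite{BOO} to identify the source with $\ZZ$-valued functions on segments,
\[
\mathop{\rm Rk}\colon K_0(\mathsf{Rep}(\cG)_{\mathop{\rm Rk}}) \xrightarrow{\;\approx\;} \ZZ^{\mathop{\rm Seg}(\cG)},
\]
a class being recorded by the ranks of its structure maps. Dually, the corollary to \thmref{bigOplus} identifies the target with $\ZZ$-valued functions on the strata of $\cX(\cG)$,
\[
K_0(\mathsf{pMod^{\Vect}}(\cX(\cG))) \cong \ZZ^{\mathop{\rm spine}\lvert[n_1]\rvert \times \dotsb \times \mathop{\rm spine}\lvert[n_d]\rvert},
\]
using the indexing of the strata recalled just above and the fact that over a finite set the direct sum, the direct product, and the function space all coincide. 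Both groups are thus free abelian on explicit finite index sets, and the second set injects into the first via $\iota$.

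Next I would construct the comparison map as pullback of functions along $\iota$: for a rank function $\phi \in \ZZ^{\mathop{\rm Seg}(\cG)}$, set $\iota^\ast\phi := \phi \circ \iota$, a function on the strata index set. This is a homomorphism
\[
\iota^\ast\colon \ZZ^{\mathop{\rm Seg}(\cG)} \longrightarrow \ZZ^{\mathop{\rm spine}\lvert[n_1]\rvert \times \dotsb \times \mathop{\rm spine}\lvert[n_d]\rvert},
\]
which under the two identifications is the asserted map $K_0(\mathsf{Rep}(\cG)_{\mathop{\rm Rk}}) \to K_0(\mathsf{pMod^{\Vect}}(\cX(\cG)))$. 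That it is a projection is immediate from $\iota$ being injective: extending a function on the strata index set by zero off the image of $\iota$ produces a section of $\iota^\ast$, so $\iota^\ast$ is a split surjection. Concretely the image of $\iota$ consists of the ``unit'' segments $(p,p')$ with $p'_k - p_k \in \{0,1\}$ in each coordinate, and $\iota^\ast$ discards exactly the coordinates indexed by the remaining, longer, segments---the grid analogue of discarding the long edges of $\Delta^n$ in the preceding proposition.

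The step I expect to carry the real content, rather than the routine surjectivity, is checking that this combinatorial $\iota^\ast$ is genuinely the map induced by the geometry, i.e.~that $\iota$ correctly realizes $\cX(\cG)$ as a coarsening of a subcomplex of $\lvert\cG\rvert$ and that pullback along it is honest restriction of the cellular rank/Euler-index data. In the linear case this was transparent because $\mathop{\rm spine}\lvert[n]\rvert$ embeds in $\mathop{{\rm sk}_1}\lvert[n]\rvert$ as an actual subcomplex, so restriction is literally pullback along an inclusion of cells. For a general grid, $\cX(\cG)$ is cubical while $\lvert\cG\rvert$ is simplicial, so a single $k$-cube of $\cX(\cG)$ need not be one simplex of $\mathop{{\rm sk}_1}\le^{\cG}_\bullet$; one must verify that $\iota$ matches each product-spine cell to the segment recording its source-to-target structure map, and that this coarsening does not perturb the identification of $K_0$ with functions on strata. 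Once this matching is in place, injectivity of $\iota$ supplies surjectivity of $\iota^\ast$ for free, and the proposition follows; taking $d = 1$ recovers the previous proposition as the special case $\cG = [n]$.
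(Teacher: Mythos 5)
Your proposal is correct and follows essentially the same route as the paper: the paper also identifies $K_0(\mathsf{Rep}(\cG)_{\mathop{\rm Rk}})$ with $\ZZ^{\mathop{\rm Seg}(\cG)}$ via Theorem 4.10 of \cite{BOO}, identifies the target with the free abelian group on the strata of $\cX(\cG)$ (indexed by the product of spines) via additivity, and obtains the projection as the coordinate projection dual to the injection $\iota$, exactly as in the linear-order case. Your write-up simply makes explicit the restriction-of-functions/split-surjection details that the paper compresses into the phrase ``as before we obtain a projection map at the level of $K$-groups.''
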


Thus, when $\cP$ is of the form $[n]$, or more generally is a finite grid, we have defined a projection map that allows for direct comparison between the group $K_0 (\mathsf{Rep}(\cP)_{\mathop{\rm Rk}})$ of Botnan, Oppermann, and Oudot, and the group $K_0(\mathsf{pMod^{\Vect}} (\cX(\cP))$ of the present work.

\bibliographystyle{amsalpha}
\bibliography{refs}
\end{document}